\documentclass[article, 11pt, reqno]{amsart}

\usepackage{amssymb, amsmath}
\usepackage[margin=2.5cm]{geometry}
\usepackage{hyperref}
\usepackage[dvipsnames]{xcolor}
\usepackage[color=Purple!50!white, textwidth=20mm]{todonotes}
\setlength{\marginparwidth}{2cm}

\newtheorem{theorem}{Theorem}

\theoremstyle{definition}

\numberwithin{equation}{section}

\begin{document}

\title{Ramsey numbers of trails and circuits}

\author{David Conlon}
\address{Department of Mathematics, California Institute of Technology, Pasadena, CA 91125, USA}
\email{dconlon@caltech.edu}
\thanks{ Conlon was supported by NSF Award DMS-2054452 and Tyomkyn by ERC Synergy Grant DYNASNET 810115, the H2020-MSCA-RISE Project CoSP- GA No.~823748 and GA\v{C}R Grant 19-04113Y}

\author{Mykhaylo Tyomkyn}
\address{Department of Applied Mathematics, Charles University, 11800 Prague, Czech Republic}
\email{tyomkyn@kam.mff.cuni.cz}
\thanks{}

\date{}

\maketitle

\begin{abstract}
We show that every two-colouring of the edges of the complete graph $K_n$ contains a monochromatic  
trail or circuit of length at least $2n^2/9 +o(n^2)$, which is asymptotically best possible.
\end{abstract}

A \emph{trail} in a graph is a walk without repeated edges and a \emph{circuit} is a closed trail, with the same first and last vertex. The \emph{length} of a trail or circuit is its number of edges. Recently, Osumi~\cite{Osu} investigated Ramsey numbers for trails, proving that every two-colouring of the edges of $K_n$ contains a monochromatic trail of length at least $n-1$, while there are two-colourings where the longest monochromatic trail has length at most $n^2/4+o(n^2)$. In this note, we improve these results. 

\begin{theorem} \label{thm:main}
Every two-colouring of the edges of $K_n$ contains a monochromatic circuit (and so a trail) of length at least $2n^2/9+O(n^{3/2})$ and this is asymptotically tight. 
\end{theorem}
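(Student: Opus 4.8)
The plan is to use the reformulation that a circuit of length $\ell$ in a graph $G$ is exactly an Euler tour of a connected subgraph with $\ell$ edges all of whose degrees are even; write $\mathrm{ct}(G)$ for the largest such $\ell$. Two facts drive the argument. First, a circuit traverses no bridge — crossing a bridge once makes returning impossible — so $\mathrm{ct}(G)$ is attained inside a single bridgeless block; deleting all bridges of $G$ therefore yields a partition $V=V_1\cup\dots\cup V_k$ into $2$-edge-connected pieces (and isolated vertices) with at most $n-1$ edges of $G$ running between the parts. Second, for a $2$-edge-connected graph $H$ one has $\mathrm{ct}(H)\ge|E(H)|-O(|V(H)|)$: deleting a minimum $T$-join, where $T$ is the set of odd-degree vertices, removes only a forest and — chosen suitably — leaves $H$ connected, producing a connected even spanning subgraph. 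Establishing this second fact, together with controlling the analogous passage from a dense connected graph to a near-Euler circuit inside it (discard the few vertices incident to too many crossing edges to recover linear minimum degree; then there are $O(1)$ bridges, and the parity and connectivity corrections cost only lower-order many edges), is the part I expect to require the most care — this is also where the $O(n^{3/2})$ error term enters. Combining the two facts, every $G$ admits a partition as above with, moreover, $|E(G[V_i])|\le\mathrm{ct}(G)+O(n)$ for each $i$.

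For tightness I would take the colouring that balances the two ways a colour can avoid a long circuit. Split $V$ into $S_1$ of size $\lceil 2n/3\rceil$ and $S_2$ of size $\lfloor n/3\rfloor$, colour all edges inside $S_1$ or inside $S_2$ red and all edges between them blue. Then red is a disjoint union of two cliques, so $\mathrm{ct}(R)\le\binom{|S_1|}{2}\le 2n^2/9+O(n)$, and blue is $K_{S_1,S_2}$, so $\mathrm{ct}(B)\le|S_1|\,|S_2|\le 2n^2/9+O(n)$. The constant $2/9$ is precisely $\min_{x\in[1/2,1]}\max\{x^2/2,\ x(1-x)\}$, attained at $x=2/3$: $x^2/2$ is the size of the largest clique a colour can be forced to contain and $x(1-x)$ is the complete bipartite graph left between the parts.

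For the lower bound, assume $K_n=R\cup B$ with $|E(R)|\ge\binom n2/2$ and suppose for contradiction that $\mathrm{ct}(R),\mathrm{ct}(B)<2n^2/9-\omega(n^{3/2})$. Apply the decomposition above to $R$: we obtain $V=V_1\cup\dots\cup V_k$ with only $O(n)$ red edges between parts and $|E(R[V_i])|<2n^2/9-\omega(n^{3/2})$ for every $i$, since otherwise that block is already a long red circuit. From $\sum_i|E(R[V_i])|\ge|E(R)|-O(n)\ge n^2/4-O(n)$ and $|E(R[V_i])|\le\binom{|V_i|}{2}$ we deduce $k\ge 2$ and $|V_1|\ge n/2-o(n)$ for the largest part. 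We now compare $|V_1|$ with $2n/3$. If $|V_1|\le 2n/3$ then every part has size at most $2n/3$, so $\sum_i|V_i|^2\le(2n/3)^2+(n/3)^2=5n^2/9$ and the blue graph between the parts has at least $\binom n2-\sum_i\binom{|V_i|}{2}-O(n)\ge 2n^2/9-O(n)$ edges; this graph is connected and, after the trimming described above, has linear minimum degree, hence contains a circuit of length $2n^2/9-O(n^{3/2})$ — a contradiction. If $|V_1|>2n/3$ then $\sum_{i\ge 2}|V_i|<n/3$, so the blue subgraph consisting of the blue edges inside $V_1$ together with all blue edges between parts has at least $\binom n2-|E(R[V_1])|-\sum_{i\ge 2}\binom{|V_i|}{2}-O(n)>\tfrac{n^2}{2}-\tfrac{2n^2}{9}-\tfrac{n^2}{18}-O(n)=2n^2/9-O(n)$ edges; it is connected (a vertex sends at most one red edge into the $2$-edge-connected block $R[V_1]$, hence at least $|V_1|-1$ blue edges there) and densifies as before, giving a blue circuit of length $2n^2/9-O(n^{3/2})$ — again a contradiction. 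In every case some colour class contains a circuit of length at least $2n^2/9-O(n^{3/2})$, and combined with the construction this proves the theorem.
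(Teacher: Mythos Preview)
Your overall strategy is sound and genuinely different from the paper's. You decompose the denser colour into its $2$-edge-connected blocks and split into two cases according to whether the largest block has order at most or more than $2n/3$; the paper instead first deletes a forest from each colour to make every vertex have even degree in both colours, and then does a four-way case split on the order $n_1$ of the largest blue \emph{component}. Your block decomposition buys you the pleasant fact that a vertex outside a $2$-edge-connected block $V_1$ sends at most one red edge into $V_1$ (two such edges would close a red cycle through $V_1$), which makes the cross-edges almost entirely blue and streamlines the case analysis. The paper's approach, by contrast, never needs to produce a connected even subgraph by hand: once degrees are even, every connected component is automatically Eulerian, and the only task is to locate a component with enough edges; for the one case where this is not immediate (all blue components small) it quotes a result of Sz\'ecsi that a connected graph of average degree $t$ contains a trail of length $\binom{t}{2}+O(t)$.

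There is one concrete gap. Your ``second fact'' --- that in a $2$-edge-connected $H$ one can delete a suitably chosen minimum $T$-join (a forest) and remain connected --- is not true as stated: in $K_{2,3}$ the odd-degree set is $T=\{a,b\}$ (the side of size two), every minimum $T$-join is a path $a\text{--}x\text{--}b$ through some $x$ on the other side, and deleting it isolates $x$. The conclusion $\mathrm{ct}(H)\ge |E(H)|-O(|V(H)|)$ you actually need is still true, but it requires a different justification (e.g.\ first pass to a subgraph of linear minimum degree and then argue, as the paper does, or invoke a Sz\'ecsi-type trail lemma directly). Relatedly, your connectivity assertions for the blue graphs in both cases are correct but deserve a line of argument: in each case the ``at most one red edge into $V_1$'' observation shows that at most one vertex of $V_1$ can have all its edges to $V_1^c$ red, so the relevant blue graph is connected after discarding at most one vertex. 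With these two points patched, your proof goes through.
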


\begin{proof}
For the upper bound, consider the red/blue colouring of $K_n$ where the red edges form a complete bipartite graph between two blue cliques of orders $n/3$ and $2n/3$, each rounded appropriately. It is easily checked that the largest monochromatic component has size $2n^2/9+O(n)$ and so the longest monochromatic trail or circuit has length at most $2n^2/9+O(n)$. 

For the lower bound, suppose that we are given a red/blue colouring of the complete graph $K_n$. After removing, for each colour class, a suitable forest that meets all odd degree vertices (see, for instance,~\cite[Proposition 2.1]{Sze}), we may assume that each colour class is Eulerian, in the sense that every vertex has even degree in both red and blue. However, this is not immediately helpful, since the colour classes may be disconnected.  

Suppose that the largest blue component
$U_1$ has order $n_1$, 
noting that the bipartite graph between $U_1$ and its complement $U_1^c$ is, apart from the at most $2n$ missing edges, complete in red. We claim that if $n_1 \leq n-2\sqrt{n}$, then the red bipartite graph between $U_1$ and $U_1^c$ has a connected component which includes all but $\sqrt{n}$ vertices of $U_1$. To see this, note that there are at least $n_1(n-n_1) - 2n$ red edges between $U_1$ and $U_1^c$, so there is a vertex in $U_1^c$ with degree at least 
$n_1 - \frac{2n}{n-n_1} \geq n_1 - \sqrt{n}$
in $U_1$. Therefore, all of these at least $n_1 - \sqrt{n}$ vertices, which we label 
$V_1$, lie in a common red component, as required. Moreover, this component contains at least $(n_1 - \sqrt{n})(n-n_1) - 2n$ edges.

If now $n_1 < n/3$, then all blue components have order less than $n/3$, so the number of red edges is at least $(1/2)\cdot n \cdot 2n/3 - 2n = n^2/3 + O(n)$ and the average red degree is at least $2n/3 + O(1)$. After deleting a bounded number of vertices, we may also assume that every vertex has red degree at least $n/2$, which implies that the remaining graph is connected. By~\cite[Theorem 1.3]{Sze}, which says  that any connected graph with average degree $t$ contains a trail of length $\binom{t}{2} + O(t)$, this then implies that there is a red trail of length at least $2n^2/9 + O(n)$, as required. Since the component containing this trail is Eulerian, we also have a red circuit of at least the same length.

If $n/3 \leq n_1 \leq 2n/3$, then the red component containing $V_1$ has size at least 
\[(n_1 - \sqrt{n})(n-n_1) - 2n \geq (n/3 - \sqrt{n})2n/3 - 2n = 2n^2/9 + O(n^{3/2}).\]
But this component is Eulerian, so we have a circuit through all of the edges of the component, giving the required circuit (and trail) of length at least $2n^2/9 + O(n^{3/2})$. 

If, instead, $2n/3 < n_1 \leq n - 2\sqrt{n}$, consider the induced graph on $U_1$. Ignoring colours for now, this graph has at least $\binom{n_1}{2} - 2n$ 
edges. Moreover, since $|V_1|\geq n_1-\sqrt{n}$, all but $\binom{\sqrt{n}}{2} \leq n$ of these edges are incident with a vertex in $V_1$, so that the number of edges in $U_1$ incident with a vertex in $V_1$ is at least $\binom{n_1}{2} +O(n)$. Together with the edges in the bipartite graph between $V_1$ and $U_1^c$, we have at least
\[\binom{n_1}{2} + n_1(n-n_1) +O(n^{3/2}) \geq 4n^2/9 + O(n^{3/2})\]
edges. Therefore, either there are at least $2n^2/9 + O(n^{3/2})$ edges in the red component containing $V_1$, which again completes the proof, or there are at least $2n^2/9 + O(n^{3/2})$ edges in the blue component in $U_1$. Since this component is also Eulerian, this again completes the proof.

It remains to deal with the case where $n_1 > n - 2\sqrt{n}$. By symmetry, we may also assume that the largest red component has more than $n - 2\sqrt{n}$ vertices. The number of edges which are not contained within the intersection of the vertex sets of these  components is at most $4\sqrt{n} \cdot n = 4 n^{3/2}$, so the total number of edges in the intersection is at least \[\binom{n}{2} - 4 n^{3/2} - 2n = \frac{n^2}{2} + O(n^{3/2}).\]
Therefore, either the largest red or the largest blue component, both of which are again Eulerian, contains at least $n^2/4 + O(n^{3/2})$ edges, more than required.
\end{proof}

Inverting the statement of Theorem~\ref{thm:main}, we see that the Ramsey number of a trail with $\ell$ edges, that is, the smallest $n$ such that every two-colouring of the edges of $K_n$ contains a monochromatic trail with $\ell$ edges, is $3 \sqrt{\ell/2} + o(\sqrt{\ell})$. Similarly, the Ramsey number for the family of all circuits with at least $\ell$ edges is $3 \sqrt{\ell/2} + o(\sqrt{\ell})$. It remains an interesting question to determine the Ramsey number for circuits of a given fixed length.

It would also be interesting to investigate the analogue of Theorem~\ref{thm:main} for more than two colours. It is reasonably easy to see that for every natural number $k$, there exists $c_k$ such that every $k$-colouring of the edges of $K_n$ contains a monochromatic circuit of length at least $c_k n^2 + o(n^2)$. Indeed, if we again delete a forest for each colour class, we have a graph with $\binom{n}{2} - kn$ edges where every coloured component is Eulerian. For $n$ sufficiently large, one of the colours in this graph has average degree at least $n/k+o(n)$, which, by a standard folklore result, implies that this colour has a subgraph of minimum degree $n/2k+o(n)$. But then there is a component in this colour with at least $n^2/8k^2 + o(n^2)$ edges. We note that this is also close to sharp. To see this, we note that  when $k-1$ is a prime power and $n$ is a multiple of $(k-1)^2$, there is a construction of Gy\'arf\'as (see, for example,~\cite{Gya}) using affine planes where every monochromatic component has order at most $n/(k-1)$ and, hence, at most $\binom{n/(k-1)}{2}$ edges (or, with a more careful analysis, $n^2/2k(k-1) + O(n)$ edges). 
The natural next step would be to determine the best possible constant $c_k$. Clearly, this problem is closely related to the question of determining the largest number of edges in a monochromatic component in any $k$-colouring of the edges of $K_n$. In fact, the answer should be asymptotically the same in both cases. Our arguments verify this for $k=2$. For the next case, $k = 3$, we suspect that the bound coming from Gy\'arf\'as' construction, $n^2/12 + O(n)$ edges, is correct.

\end{document}